%
%

\documentclass{RRMPA} 

\Tome{69}
\Year{2024}
\Issue{}
\VolPage{1} 
\CommunicatedBy{xxx xxx}
\Received{...}
\DOI{}

\theoremstyle{thm}
\newtheorem{theorem}{Theorem}[section]
\newtheorem{proposition}[theorem]{Proposition}
\newtheorem{lemma}[theorem]{Lemma}
\newtheorem{corollary}[theorem]{Corollary}

\theoremstyle{def}


\usepackage{enumerate}


\def \R {\mathbb R}

\def \Z {\mathbb Z}





\def\cB{\mathcal{B}}
\def\cC{\mathcal{C}}
\def\cD{\mathcal{D}}

\def\cF{\mathcal{F}}

\def\cH{\mathcal{H}}

\def\cK{\mathcal{K}}
\def\cL{\mathcal{L}}

\def\cN{\mathcal{N}}

\def\cS{\mathcal{S}}

\def\cU{\mathcal{U}}
\def\cV{\mathcal{V}}


\newcommand{\probab}[1]{\ensuremath{\mathbf{P}\big(#1\big)}}
\newcommand{\expect}[1]{\ensuremath{\mathbf{E}\big(#1\big)}}
\newcommand{\var}[1]{\ensuremath{\mathbf{Var}\big(#1\big)}}

\newcommand{\probabom}[1]{\ensuremath{\mathbf{P}_{\omega}\left(#1\right)}}
\newcommand{\expectom}[1]{\ensuremath{\mathbf{E}_{\omega}\left(#1\right)}}
\newcommand{\varom}[1]{\ensuremath{\mathbf{Var}_{\omega}\left(#1\right)}}

\newcommand{\condprobabom}[2]{\ensuremath{\mathbf{P}_{\omega}\left(#1\bigm|#2\right)}}

\newcommand{\ind}[1]{\ensuremath{{1\!\!1}{(#1)}}}


\DeclareMathOperator*{\esssup}{ess\sup}
\DeclareMathOperator*{\wlim}{w\!\lim}

\def\ordo{o}

\def\Ker{\mathrm{Ker}}
\def\Ran{\mathrm{Ran}}
\def\Dom{\mathrm{Dom}}

\newcommand{\abs}[1]{\ensuremath |{#1}|}
\newcommand{\norm}[1]{\ensuremath \|{#1}\|}
\newcommand{\sprod}[2]{\ensuremath \langle{#1,#2}\rangle}


\def \wt {\widetilde}

\def\wh{\widehat}



\begin{document}
\title{Central Limit Theorem for Random Walks in Divergence Free Random Drift Field - Revisited}
\titlerunning{CLT for RW in Divergence Free Random Drift Field} 
\author{B\'alint T\'oth}
\authorrunning{B. T\'oth} 
\affiliation{%
Alfr\'ed R\'enyi Institute of Mathematics, Budapest
\\
\texttt{toth.balint@renyi.hu}

\medskip 
and
\medskip 

School of Mathematics,  University of Bristol
\\
\texttt{balint.toth@bristol.ac.uk}
}

\MakeTitle
\thanks{Supported by the Hungarian National Research and Innovation Office through the grant NKFIH/OTKA K-143468.}

\begin{abstract}
In Kozma-Toth (2017) \cite{kozma-toth-17} the weak CLT was established for random walks in doubly stochastic (or, divergence-free) random environments, under the following conditions:

\smallskip
\noindent
$\circ$
Strict ellipticity assumed for the symmetric part of the drift field. 
\\
$\circ$
$\cH_{-1}$ assumed for the antisymmetric part of the drift field. 

\smallskip
\noindent
The proof relied on a martingale approximation (a la Kipnis-Varadhan) adapted to the \emph{non-self-adjoint} and \emph{non-sectorial} nature of the problem. The two substantial technical components of the proof were:

\smallskip
\noindent
$\star$
A functional analytic statement about the unbounded operator \emph{formally} written as $\abs{L+L^*}^{-1/2}(L-L^*)\abs{L+L^*}^{-1/2}$, where $L$ is the infinitesimal generator of the environment process,  as seen from the position of the moving random walker. 
\\
$\star$
A diagonal heat kernel upper bound which follows directly from Nash's inequality, or, alternatively, from the "evolving sets" arguments of Morris-Peres (2005) \cite{morris-peres-05}, valid only under the assumed \emph{strict ellipticity}.  

\smallskip
\noindent
In this note we present a partly alternative proof of the same result which relies only on functional analytic arguments and \emph{not} on the diagonal heat kernel upper bound provided by Nash's inequality. This alternative proof is relevant since it can be naturally extended to non-elliptic settings pushed to the optimum, which will be presented in  a forthcoming paper. The goal of this note is to present the argument in its simplest and most transparent form. 

\AMSclass{MSC2010: 60F05, 60G99, 60K37} 
\keywords{random walk in random environment (RWRE), divergence-free drift, incompressible flow, central limit theorem} 
\end{abstract}

\section{Introduction}
\label{S:Introduction}

\subsection{Preliminaries}
\label{ss:Preliminaries}

Let $(\Omega, \cF, \pi, (\tau_z:z\in\Z^d))$ be a probability space with an ergodic $\Z^d$-action. Denote by  $\cU:=\{k\in\Z^d: \abs{k}=1\}$ the set of elements of $\Z^d$ neighbouring the origin which will be the set of possible elementary steps of a continuous time nearest neighbour random walk on $\Z^d$.  Let $b: \cU\times\Omega\to[-1,+1]$ be such that 
\begin{align}
\label{flow}
& 
b_k(\omega)+b_{-k}(\tau_k\omega)=0, 
&&
\sum_{k\in\cU} b_k(\omega)=0, 
&&
\int_\Omega b_k(\omega) \, d\pi(\omega)=0.
\end{align}
Thus, the lifted field $b:\cU\times\Z^d\times\Omega\to[-1,+1]$, 
\begin{align*}
b_k(x, \omega):= b_k(\tau_x\omega)
\end{align*}
is a (space-wise) stationary and ergodic, zero-mean, divergence-free flow (or, vector field) on $\Z^d$. 

We study the long-time behaviour of the continuous-time random walk in random environment (RWRE), $t\mapsto X(t)\in \Z^d$ with jump rates
\begin{align}
\label{walk}
\condprobabom{X(t+dt)= x+k}{X(t)=x} = 
\underbrace{(1+b_k(x,\omega))}_{\displaystyle p_k(x, \omega)} dt + \ordo(dt),
\end{align}
and initial position $X(0)=0$. In \eqref{walk}  $p_k(x)$ stands for the jump rate from site $x\in\Z^d$ to the neighbouring site $x+k\in\Z^d$. 

For detailed physical motivation and a collection of concrete examples of the problem we refer to \cite{kozma-toth-17}, \cite{toth-18b}. However, for the reader's convenience we recall concisely some of these in section \ref{ss: Comments, history, examples} below.

We use the notation $\probabom{\cdot}$, $\expectom{\cdot}$  and $\varom{\cdot}$ for \emph{quenched} probability, expectation and variance. That is: probability, expectation, and variance with respect to the distribution of the random walk $X(t)$, \emph{conditionally, with given fixed environment $\omega\in\Omega$}. The notation $\probab{\cdot}:=\int_\Omega\probabom{\cdot} {d}\pi(\omega)$, $\expect{\cdot}:=\int_\Omega\expectom{\cdot} {d}\pi(\omega)$ and $\var{\cdot}:=\int_\Omega\varom{\cdot} {d}\pi(\omega) + \int_\Omega\expectom{\cdot}^2 {d}\pi(\omega) - \expect{\cdot}^2$ will be reserved for \emph{annealed} probability, expectation and variance. That is: probability,  expectation and variance with respect to the random walk trajectory $t\mapsto X(t)$ \emph{and} the environment $\omega$, sampled according to the distribution $\pi$. 

The \emph{environment process} (as seen from the position of the random walker) is, $t\mapsto \eta_t\in\Omega$ defined as 
\begin{align}
\label{envproc}
\eta_t:= \tau_{X(t)}\omega. 
\end{align}
This is a pure jump Markov process on the state space $\Omega$. It is well known (and easy to check, see e.g., \cite{kozlov-85}) that due to the conditions imposed in \eqref{flow} the a priori distribution $\pi$ of the environment is (time-wise) stationary and ergodic for the process $t\mapsto \eta_t\in\Omega$. Hence it follows that the random walk $t\mapsto X(t)$ will have zero-mean stationary and ergodic annealed increments. Though, in the annealed setting the walk is not Markovian. Hence, the strong law 
\begin{align*}
\lim_{t\to\infty} \frac{X(t)}{t}
=0, 
\qquad \text{a.s.}
\end{align*}
obviously follows. Our goal is to establish the CLT
\begin{align}
\label{clt}
t^{-1/2} X(t) \buildrel t\to\infty \over \Rightarrow \cN(0,\sigma^2)
\end{align}
with non-degenerate covariance matrix $\sigma^2$, under suitable assumptions.

\subsection{The $\cH_{-1}$-condition}
\label{ss: The $H_{-1}$-condition}

Beside \eqref{flow} we assume the notorious $\cH_{-1}$-condition holding for the flow field $b$:
\begin{align}
\label{H-1}
\int_{[-\pi,\pi]^d} 
\wh g(p)
\sum_{k\in\cU}
\wh C_{kk}(p)
dp
<\infty. 
\end{align}
where
\begin{align*}
\wh g(p)
:=
\big( \sum_{j=1}^d (1-\cos p_j) \big)^{-1}
\end{align*}
is the Fourier transform of the $\Z^d$-Laplacian's Green-function, and 
\begin{align*}
\wh C_{k,l}(p)
:=
\sum_{x\in \Z^d}
e^{i x\cdot p} 
C_{k,l}(x)
\end{align*}
is the Fourier transform of the correlation of the drift field
\begin{align*}
C_{k,l}(x)
:=
\int_\Omega 
b_k(\omega) b_l(\tau_x\omega) \, d\pi(\omega).
\end{align*}

This is the most natural infrared bound on the decay of correlations of the drift-field $b$. It is well known  (see, e.g. \cite{komorowski-landim-olla-12}, \cite{kozma-toth-17}) that it implies finiteness of the asymptotic variance of $t^{-1/2}X(t)$ on the left hand side of \eqref{clt}, as $t\to\infty$. It is also known that failure of  \eqref{H-1} typically comes with super-diffusive (rather than diffusive) asymptotics of $X(t)$, see, e.g., \cite{komorowski-olla-02}, \cite{toth-valko-12}, \cite{ledger-toth-valko-18}, \cite{cannizzaro-haunschmid-sibitz-toninelli-22}, \cite{chatzigeorgiou-morfe-otto-wang-22}. 

As shown in \cite{kozma-toth-17} (see \cite{komorowski-landim-olla-12} for the continuous space setting) the $\cH_{-1}$-condition \eqref{H-1} is \emph{equivalent} to the following. There exists a function $h:\cU\times\cU\times\Omega\to\R$ such that 
\begin{align}
\label{h-tensor}
&
h_{k,l}(\omega)
=
-h_{-k,l}(\tau_k\omega)
=
-h_{k,-l}(\tau_l\omega)
=
-h_{l,k}(\omega), 
\\[10pt]
\label{h-l2}
&
h_{k,l}\in\cL_2(\Omega,\pi)
\end{align}
and
\begin{align}
\label{v-is-curl-of-h}
&
b_{k}(\omega)
=
\sum_{l\in\cU} h_{k,l}(\omega)
=
\frac12 \sum_{l\in\cU} (h_{k,l}(\omega)-h_{k,l}(\tau_{-l}\omega)).
\end{align}
The second equality in \eqref{v-is-curl-of-h} obviously follows from the symmetries \eqref{h-tensor} of the field $h$. 

Note that all three conditions in \eqref{flow} follow from \eqref{h-tensor}, \eqref{h-l2} and \eqref{v-is-curl-of-h}, which, as shown in \cite{kozma-toth-17}, are (jointly) equivalent to \eqref{H-1}.

The (anti)symmetry conditions \eqref{h-tensor} mean that the lifted field  $h:\cU\times\cU\times\Z^d\times\Omega\to\R$ 
\begin{align*}
h_{k,l}(x,\omega):= h_{k,l}(\tau_x\omega)
\end{align*}
is a translation-wise \emph{ergodic random function of the oriented plaquettes} of $\Z^d$, a.k.a. a (square integrable) \emph{stream tensor}.

\smallskip
\noindent
{\bf Summarizing:} 
We make the \emph{structural} assumptions \eqref{h-tensor}\&\eqref{v-is-curl-of-h} and the \emph{integrability} assumption \eqref{h-l2}. 

\subsection{The CLT}
\label{ss: The CLT}

The standard martingale decomposition of the displacement is
\begin{align}
\label{martingale decomposition}
X(t)=
\underbrace{\big(X(t)-\int_0^t \varphi(\eta_s)ds \big)}_{\displaystyle =: Y(t)} 
+ 
\underbrace{\int_0^t \varphi(\eta_s)ds}_{\displaystyle =: I(t)}
=:
Y(t)+ I(t)
\end{align}
with the drift function $\varphi : \Omega\to\R^d$
\begin{align}
\label{drift}
\varphi(\omega):= \sum_{k\in\cU} k b_k(\omega),
\end{align}
lifted to $\varphi: \Z^d\times\Omega\to \R^d$ as 
\begin{align*}
\varphi(x,\omega):= 
\varphi(\tau_x\omega).
\end{align*}
The  process $t\mapsto Y(t)\in\R^d$ on the right hand side of \eqref{martingale decomposition} is a quenched martingale whose increments are stationary, ergodic and square integrable in the annealed setting. 

In \cite{kozma-toth-17} the Central Limit Theorem \eqref{clt} was established, under optimal (minimal) necessary assumption. 

\begin{theorem}
{\rm (\cite{kozma-toth-17} Theorem 1)}
\label{thm: main}
Assume \eqref{h-tensor}, \eqref{h-l2}, and \eqref{v-is-curl-of-h}. Then the process  $t\mapsto I(t)\in\R^d$ on the right hand side of \eqref{martingale decomposition} is decomposed as 
\begin{align*}
I(t)=Z(t)+E(t)
\end{align*}
so that $t\mapsto Z(t)\in\R^d$ is a quenched martingale whose increments are stationary, ergodic and square integrable in the annealed setting, and 
\begin{align}
\label{error bound}
\lim_{t\to\infty}
t^{-1}\expect{\abs{E(t)}^2}=0. 
\end{align}
The martingales $t\mapsto Y(t)$ and $t\mapsto Z(t)$ do not cancel.
\end{theorem}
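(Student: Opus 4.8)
\smallskip
\noindent\emph{Outline of a proof.}
The plan is to run the Kipnis--Varadhan martingale approximation for $I(t)=\int_0^t\varphi(\eta_s)\,ds$, in the form adapted to the non-self-adjoint, non-sectorial generator $L$ of the environment process $t\mapsto\eta_t$. Write $S:=\tfrac12(L+L^*)$ and $A:=\tfrac12(L-L^*)$ on $\cL_2(\Omega,\pi)$; by \eqref{flow} both are bounded, $-S\ge0$ with $\ker(-S)$ the constants, and $A$ is skew-symmetric. Put $\norm{f}_1^2:=\sprod{f}{(-S)f}$ and $\norm{f}_{-1}^2:=\sprod{f}{(-S)^{-1}f}$ on $\{1\}^\perp$. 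The structural assumptions \eqref{h-tensor}--\eqref{v-is-curl-of-h}, equivalently \eqref{H-1}, do two things: they put the centred drift $\varphi$ of \eqref{drift} in $\cH_{-1}$, i.e.\ $\norm{\varphi}_{-1}<\infty$, and they endow $A$ with the ``curl of $h$'' stream-tensor structure under which the operator of the first $\star$ is well behaved. First I would solve $(\lambda-L)u_\lambda=\varphi$ for $\lambda>0$: since $A$ is bounded and $\re\sprod{(\lambda-L)f}{f}=\lambda\norm{f}^2+\norm{f}_1^2$, the solution exists, is unique, and pairing with $u_\lambda$ gives immediately $\lambda\norm{u_\lambda}^2+\norm{u_\lambda}_1^2=\sprod{\varphi}{u_\lambda}\le\norm{\varphi}_{-1}^2$. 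Then, with $w_\lambda:=(-S)^{1/2}u_\lambda$ (so $\norm{w_\lambda}=\norm{u_\lambda}_1$), $\psi:=(-S)^{-1/2}\varphi$ (the inclusion $\psi\in\cL_2$ being exactly $\varphi\in\cH_{-1}$), and $T:=\abs{L+L^*}^{-1/2}(L-L^*)\abs{L+L^*}^{-1/2}=(-S)^{-1/2}A(-S)^{-1/2}$ the skew-symmetric operator of the first $\star$, conjugation of the resolvent equation by $(-S)^{-1/2}$ turns it into
\begin{equation*}
\big(1+\lambda(-S)^{-1}-T\big)\,w_\lambda=\psi .
\end{equation*}
At this point I invoke the functional-analytic statement of the first $\star$: under the stream-tensor structure $T$ is essentially skew-adjoint, so $1-\ol T$ is a bijection of $\{1\}^\perp$ with $\norm{(1-\ol T)^{-1}}\le1$, the element $w_0:=(1-\ol T)^{-1}\psi$ is well defined, and $\norm{w_\lambda}\le\norm{\psi}$ uniformly in $\lambda$. (The domain bookkeeping justifying the conjugation belongs to this input, or can be bypassed by a preliminary regularisation of $-S$.)

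\smallskip
\noindent
The crux — and the step at which this proof departs from \cite{kozma-toth-17} by discarding the Nash/heat-kernel estimate — is to promote these uniform bounds to the two Kipnis--Varadhan limits $\lambda\norm{u_\lambda}^2\to0$ and ``$u_\lambda$ Cauchy in $\norm{\cdot}_1$'' as $\lambda\downarrow0$. Both follow once $w_\lambda\to w_0$ \emph{strongly} in $\cL_2$, because $\norm{u_\lambda-u_\mu}_1=\norm{w_\lambda-w_\mu}$ while, using the displayed equation and skew-symmetry of $T$, $\lambda\norm{u_\lambda}^2=\sprod{\lambda(-S)^{-1}w_\lambda}{w_\lambda}=\sprod{\psi}{w_\lambda}-\norm{w_\lambda}^2$. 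I would establish the strong convergence by a soft compactness argument. The family $w_\lambda$ is bounded, so along a subsequence $w_\lambda\rightharpoonup w_*$; testing the displayed equation against $v$ in a suitable dense subspace of $\Dom(\ol T)$ (so that $\sprod{Tw_\lambda}{v}=-\sprod{w_\lambda}{\ol T v}$, and $\lambda\sprod{(-S)^{-1}w_\lambda}{v}=\lambda\sprod{w_\lambda}{(-S)^{-1}v}\to0$), the weak limit obeys $\sprod{w_*}{(1+\ol T)v}=\sprod{\psi}{v}$ for all such $v$, i.e.\ $(1-\ol T)w_*=\psi$, whence $w_*=w_0$; so $w_\lambda\rightharpoonup w_0$. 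Finally $\lambda\norm{u_\lambda}^2\ge0$ gives $\norm{w_\lambda}^2\le\sprod{\psi}{w_\lambda}\to\sprod{\psi}{w_0}=\norm{w_0}^2$, and weak lower semicontinuity gives $\liminf\norm{w_\lambda}\ge\norm{w_0}$; hence $\norm{w_\lambda}\to\norm{w_0}$, which together with $w_\lambda\rightharpoonup w_0$ yields strong convergence. Then $\lambda\norm{u_\lambda}^2=\sprod{\psi}{w_\lambda}-\norm{w_\lambda}^2\to0$ and $u_\lambda\to u_0$ in $\norm{\cdot}_1$, and the standard Kipnis--Varadhan bookkeeping gives the theorem: $Z(t)$ is the $\cL_2(\probab{\cdot})$-limit of the Dynkin martingales $M^\lambda_t:=u_\lambda(\eta_t)-u_\lambda(\eta_0)-\int_0^t(Lu_\lambda)(\eta_s)\,ds$ (the limit exists since $\expect{\abs{M^\lambda_t-M^\mu_t}^2}\le4t\,\norm{u_\lambda-u_\mu}_1^2$), it is a quenched martingale with stationary, ergodic, square-integrable annealed increments, and $E(t):=I(t)-Z(t)=u_\lambda(\eta_0)-u_\lambda(\eta_t)+\lambda\int_0^t u_\lambda(\eta_s)\,ds-\big(M^\lambda_t-Z(t)\big)$ satisfies \eqref{error bound} by combining $\lambda\norm{u_\lambda}^2\to0$, $\sup_\lambda\norm{u_\lambda}_1<\infty$ and $M^\lambda_t\to Z(t)$.

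\smallskip
\noindent
For the last assertion I would show that the martingales $Y$ and $Z$ are, in fact, \emph{orthogonal} — which more than suffices. Both are martingales in the common annealed filtration with zero initial value, so $\expect{Y^{(i)}(t)Z^{(j)}(t)}=\lim_{\lambda\downarrow0}\expect{Y^{(i)}(t)M^{\lambda,(j)}(t)}$, and $\expect{Y^{(i)}(t)M^{\lambda,(j)}(t)}=\expect{\sprod{Y^{(i)}}{M^{\lambda,(j)}}_t}$ equals $t$ times the drift rate $\sum_{k\in\cU}k_i\int_\Omega(1+b_k(\omega))\big(u_\lambda^{(j)}(\tau_k\omega)-u_\lambda^{(j)}(\omega)\big)\,d\pi(\omega)$; the part coming from the ``$1$'' vanishes by invariance of $\pi$, and the part coming from the ``$b_k$'' vanishes after the substitution $\omega\mapsto\tau_k\omega$ together with $b_k(\tau_{-k}\,\cdot)=-b_{-k}(\cdot)$ and $\sum_{k}k_ib_k=\varphi_i$, which make the two resulting terms cancel. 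Hence $\expect{Y^{(i)}(t)Z^{(j)}(t)}=0$, so the asymptotic covariance is $\sigma^2=\lim_{t\to\infty}t^{-1}\expect{(Y(t)+Z(t))\otimes(Y(t)+Z(t))}=\sigma_Y^2+\sigma_Z^2$ with $\sigma_Z^2$ positive semi-definite and $\sigma_Y^2=\sum_{k\in\cU}\expect{1+b_k}\,k\otimes k=2I_d$ (twice the $d\times d$ identity), which is positive definite. Thus $Z\neq-Y$, the martingales do not cancel, and \eqref{clt} holds with a non-degenerate $\sigma^2$.

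\smallskip
\noindent
The main obstacle I anticipate is the middle paragraph: making the passage $\lambda\downarrow0$ rigorous in spite of $(-S)^{-1}$ being unbounded — so that $\lambda(-S)^{-1}$ does not go to zero in operator norm — and in spite of the domain subtleties of the conjugated equation. The remedy is precisely the weak-compactness-plus-norm-convergence argument above, and it is this that renders the proof independent of the diagonal heat-kernel upper bound; the single ingredient imported as a black box from the functional-analytic part of the paper is the essential skew-adjointness of $T$, without which $1-\ol T$ need not be surjective and $w_0$ would fail to exist.
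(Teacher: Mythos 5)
There is a genuine gap at the very heart of your argument: you import ``the essential skew-adjointness of $T=(-S)^{-1/2}A(-S)^{-1/2}$'' as a black box. That statement is not among the hypotheses \eqref{h-tensor}, \eqref{h-l2}, \eqref{v-is-curl-of-h} of Theorem \ref{thm: main}; it has to be \emph{proved} from them, and proving it is precisely the nontrivial core of the theorem. Skew-symmetry of $B=S^{-1/2}AS^{-1/2}$ on a dense domain is easy, but essential skew-self-adjointness is not automatic: a densely defined skew-symmetric operator may fail to be essentially skew-self-adjoint, in which case $1-\ol T$ need not be surjective and your $w_0=(1-\ol T)^{-1}\psi$ simply does not exist — as you yourself concede in your closing paragraph. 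In \cite{kozma-toth-17} this is exactly the point where the Nash/Morris--Peres heat-kernel bound (hence strict ellipticity) entered, and the sole purpose of the present note is to prove this statement by purely functional-analytic means. Concretely, what is missing is the content of Proposition \ref{prop: B-is-skew-self-adjoint}: choose the core $\cC=\{\abs{\Delta}^{1/2}g:\, g\in\cS_\infty\}$, conjugate by the Riesz isometries \eqref{Riesz-ops} to replace $B$ by $F=\Pi H\Pi$ on $\cF=\nabla\cS_\infty\subset\cK_\infty$ (this is where the $\cL_2$ stream tensor of \eqref{h-tensor}--\eqref{v-is-curl-of-h} enters, through the multiplication operator \eqref{H-op}), and then show that the adjoint $F^*$ is itself skew-symmetric via the truncation $h^K_{k,l}:=h_{k,l}\ind{\abs{h_{k,l}}\le K}$ and the weak-limit identity $F^*w=-\wlim_{K\to\infty}\Pi H^K w$ (uniform integrability plus a.s.\ convergence), whence $F^{**}=-F^*$. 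Without this step, or a substitute for it, your scheme does not close.

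The rest of your proposal is essentially sound, but it re-derives material the paper simply cites: your middle paragraph (resolvent equation, conjugation by $(-S)^{-1/2}$, weak compactness, norm convergence, Dynkin martingales) is in substance a proof sketch of the relaxed-sector-condition theorem of \cite{horvath-toth-veto-12}, quoted here as Theorem \ref{thm: rsc} on top of Theorem \ref{thm: toth-86}; it is a legitimate alternative to invoking those results, modulo the domain bookkeeping you defer (your test functions $v$ must lie in a core of $\ol T$ contained in $\Dom((-S)^{-1})$, and the a priori bound should read $\lambda\norm{u_\lambda}^2+\norm{u_\lambda}_1^2\le\norm{\varphi}_{-1}\norm{u_\lambda}_1$ before absorption, not $\le\norm{\varphi}_{-1}^2$). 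Your final paragraph is a nice touch: the computation showing $\expect{Y^{(i)}(t)M^{\lambda,(j)}(t)}=0$ is correct (the ``$1$'' part vanishes by stationarity, the ``$b_k$'' part by the antisymmetry in \eqref{flow} after reindexing $k\mapsto-k$), so $Y\perp Z$ and non-degeneracy follows from $\expect{Y(t)\otimes Y(t)}=2I_dt$; this gives the non-cancellation more explicitly than the statement requires. But none of this repairs the missing proof of essential skew-self-adjointness, which is the one step that genuinely depends on the structural assumptions and is the actual subject of the paper.
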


\begin{corollary}
\label{cor: main}
Assume \eqref{h-tensor}, \eqref{h-l2}, and \eqref{v-is-curl-of-h}. Then the displacement of the random walk  $t\mapsto X(t)\in\R^d$ is decomposed as 
\begin{align*}
X(t)=
\underbrace{Y(t)+Z(t)}_{\displaystyle =:\wt X(t)}+E(t)
\end{align*}
so that for $\pi$-almost all $\omega$,  under $\probabom{\cdot}$, 
\begin{align*}
N^{-1/2} \wt X(Nt) \Rightarrow \sigma W_\sigma (t), 
\end{align*}
where $t\mapsto W_\sigma(t)$ is a non-degenerate Wiener process on $\R^d$, and the error term $E(t)$ is subdiffusive as shown in \eqref{error bound}. 
\end{corollary}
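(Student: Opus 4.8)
The plan is to deduce Corollary~\ref{cor: main} from Theorem~\ref{thm: main} essentially by a Kipnis--Varadhan type martingale argument together with the quenched invariance principle for square-integrable ergodic martingale functionals of the environment process.

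\medskip
\noindent
\textbf{Step 1: Assemble the martingale.}
Given Theorem~\ref{thm: main}, set $\wt X(t):=Y(t)+Z(t)$, so that $X(t)=\wt X(t)+E(t)$ with $E(t)$ subdiffusive in the annealed $\cL_2$ sense \eqref{error bound}. Both $Y$ and $Z$ are quenched martingales (with respect to the natural filtration of the joint process $(X(t),\eta_t)$) whose increments are stationary, ergodic and square integrable in the annealed setting; hence $\wt X$ is a quenched martingale with the same properties. Because the environment process $t\mapsto\eta_t$ is stationary and ergodic under $\pi$, the increments of $\wt X$ form an ergodic stationary sequence of martingale differences (read off at integer times, say), and one computes the covariance matrix $\sigma^2:=\expect{\wt X(1)\wt X(1)^{\mathsf T}}$; the non-cancellation clause of Theorem~\ref{thm: main} (that $Y$ and $Z$ do not cancel) is exactly what guarantees $\sigma^2$ is non-degenerate, i.e.\ $v^{\mathsf T}\sigma^2 v>0$ for every $v\neq 0$.

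\medskip
\noindent
\textbf{Step 2: Quenched CLT for the martingale part.}
Apply the functional central limit theorem for additive functionals of stationary ergodic Markov chains that are already in martingale form --- in the quenched form, this is the invariance principle of Kipnis--Varadhan / De Masi--Ferrari--Goldstein--Wick adapted to random environments; see the treatment in \cite{komorowski-landim-olla-12}. The key inputs are: (i) $\wt X$ is a martingale additive functional of the stationary ergodic process $\eta$; (ii) its increments are square integrable; (iii) $\pi$ is ergodic for $\eta$. These give, for $\pi$-almost every $\omega$, under $\probabom{\cdot}$, the convergence $N^{-1/2}\wt X(Nt)\Rightarrow \sigma W_\sigma(t)$ in the Skorokhod topology, where $W_\sigma$ is a standard $d$-dimensional Wiener process; the almost-sure (quenched) upgrade from the annealed statement is the standard ergodic-averaging argument for the quadratic variation, $N^{-1}\langle \wt X\rangle_{Nt}\to t\,\sigma^2$ $\probabom{\cdot}$-a.s., combined with a Lindeberg-type bound on the jumps (the jumps of $X$, and hence of $\wt X$ up to the $E$-correction, are bounded by the fixed step set $\cU$).

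\medskip
\noindent
\textbf{Step 3: Discard the error.}
Finally, \eqref{error bound} gives $N^{-1}\expect{\abs{E(N)}^2}\to 0$; by stationarity of increments and a maximal inequality (or a routine chaining/subsequence argument as in Kipnis--Varadhan) this upgrades to $\sup_{t\le T} N^{-1/2}\abs{E(Nt)}\to 0$ in annealed probability, and then $\pi$-a.s.\ in $\probabom{\cdot}$-probability along the full sequence. Since $X=\wt X+E$, Slutsky's theorem yields the stated quenched invariance principle for $\wt X$ (and, equivalently, the CLT \eqref{clt} for $X$ itself, with $\sigma^2$ non-degenerate).

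\medskip
\noindent
The main obstacle is \textbf{Step 2}: passing from an annealed CLT to the \emph{quenched} one. The annealed statement is immediate from the martingale CLT for stationary ergodic increments, but the quenched version requires controlling the $\omega$-dependent quadratic variation and verifying a conditional Lindeberg condition for $\pi$-almost every fixed environment --- this is where the Markov structure of $\eta_t$ and the ergodic theorem must be invoked carefully, and it is precisely the point where one uses that $\wt X$ is genuinely a martingale (not merely an $\cL_2$-approximable additive functional) so that no further error control beyond \eqref{error bound} is needed.
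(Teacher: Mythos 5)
Your proof follows essentially the same route as the paper: the paper's entire argument for the corollary is a one-line appeal to the martingale (functional) CLT for stationary, ergodic, square-integrable martingale increments \cite{mcleish-74}, applied to $\wt X=Y+Z$ supplied by Theorem \ref{thm: main}, with the quenched form obtained exactly as in your Step 2 (ergodic theorem for the $\omega$-dependent quadratic variation plus a conditional Lindeberg condition). Two caveats on the details. First, your justification of the Lindeberg condition via bounded jumps is not quite right: the jumps of $Z$ (hence of $\wt X$) are not bounded in general, since $Z$ comes out of the Kipnis--Varadhan construction and its increments are only square integrable; but no boundedness is needed --- stationarity, ergodicity and square integrability of the martingale increments already yield the conditional Lindeberg condition, which is precisely the setting of \cite{mcleish-74}. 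Second, your Step 3 overreaches: from the annealed bound \eqref{error bound} you may conclude that $N^{-1/2}E(N)\to0$ in annealed probability (hence the annealed CLT \eqref{clt} for $X$), but not that $E$ is negligible in $\probabom{\cdot}$-probability for $\pi$-almost every fixed $\omega$; that quenched upgrade is genuinely harder and is the subject of \cite{toth-18a}, which is exactly why the corollary asserts the quenched invariance principle only for $\wt X$ and records only the annealed estimate \eqref{error bound} for $E$. Since Step 3 attempts to prove more than the corollary claims, this last point does not affect the validity of your proof of the statement as written.
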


\noindent
Corollary \ref{cor: main} follows from Theorem \ref{thm: main} by direct application of the Martingale CLT, see, e.g., \cite{mcleish-74}. The proof of Theorem \ref{thm: main} in \cite{kozma-toth-17} relied on two main components: 

\smallskip
\noindent
$\star$
A functional analytic statement about the unbounded operator \emph{formally} written as 
\begin{align*}
B:=
\abs{L+L^*}^{-1/2}(L-L^*)\abs{L+L^*}^{-1/2}, 
\end{align*} 
where $L$ is the infinitesimal generator of the environment process \eqref{envproc}. See \eqref{infgen}, \eqref{L-decomp}, \eqref{B-op-concrete} below. 

\smallskip
\noindent
$\star$
A (quenched) diagonal heat kernel upper bound which follows from Nash's inequality, or, alternatively, from the "evolving sets" arguments of \cite{morris-peres-05}, valid \emph{only} under the assumed \emph{strict ellipticity}.  

\smallskip

In this note we present a partly alternative proof of the same result which relies only on functional analytic arguments and \emph{not} on the diagonal heat kernel upper bound provided by Nash's inequality. This alternative proof is relevant since it can be naturally extended to non-elliptic settings (barred by Nash) pushed to the optimum, which will be presented in  a forthcoming paper. The goal of this note is to present the argument in its simplest and most transparent form. In section \ref{Checking RSC} we present explicitly those details of the proof which differ from \cite{kozma-toth-17}.

\subsection{Comments, history, examples}
\label{ss: Comments, history, examples}

For a comprehensive exposition of the physical motivation, and historic background of the problem we refer to the monograph \cite{komorowski-landim-olla-12}, and the papers \cite{kozma-toth-17}, \cite{toth-18b}. Here we recall very concisely the key facts. Later in this subsection we also recall (informally and succinctly) some concrete examples and counter-examples for the setting \eqref{h-tensor}, \eqref{h-l2}, \eqref{v-is-curl-of-h}.

The continuous space counterpart of the random walk problem considered in this note is the diffusion in random incompressible (or, divergence-free) drift field, $t\mapsto X(t)\in\R^d$, driven by the SDE
\begin{align}
\label{diffusion}
dX(t)
=
b(X(t)) \, dt + \sqrt{2} d W(t), 
\end{align}
where $t\mapsto W(t)$ is a standard Wiener process in $\R^d$ and $x\mapsto b(x)=b(x,\omega)\in\R^d$ is a random vector field over $\R^d$ assumed to be stationary and ergodic with respect to spatial shifts, with finite second moments and zero mean, and almost surely divergence-free:
\begin{align*}
\text{div}\cdot b \equiv0, 
\qquad\text{a.s.}
\end{align*}
The question is formally the same: What are the optimal (minimal) assumptions for the central limit theorem \eqref{clt} to hold.

Motivated by a genuine physical question, namely diffusion of passive tracer particles in steady state, incompressible turbulent flow, 
the random walk and diffusion problems \eqref{walk} and  \eqref{diffusion} have an over forty years long history (spanning from the late 1970-s to the late 2010-s) with considerable effort invested in their satisfactory mathematical understanding. Some of the main stations on this road are (in chronological order) \cite{kozlov-79}, \cite{papanicolaou-varadhan-81}, \cite{osada-83}, \cite{kozlov-85}, \cite{oelschlager-88}, \cite{fannjiang-papanicolaou-96}, \cite{fannjiang-komorowski-97}, \cite{komorowski-olla-03a}, \cite{komorowski-olla-03b}, \cite{deuschel-kosters-08}, \cite{komorowski-landim-olla-12}, \cite{kozma-toth-17}, \cite{toth-18a}. For more details on historic aspects and the results obtained on the way, in the works cited above, see the historic notes in chapters 3 and 11 in the monograph \cite{komorowski-landim-olla-12} and section 1.6 of \cite{toth-18b}. 

Here follow three examples (partly, counterexamples) where conditions  \eqref{h-tensor}, \eqref{h-l2}, \eqref{v-is-curl-of-h} may or may not hold - depending on dimension. We present the examples in $\Z^2$, leaving the (more-or-less obvious) extensions to higher dimensions to the reader. The presentation is verbal and informal. For precise formalisations we refer the reader to section 7 in \cite{kozma-toth-17} and section 1.4 of \cite{toth-18b}.

\medskip

{\sl Example 1: Local rules.} 
This is the baby-version of the basic example from \cite{kozlov-85} where a much more general setting (with finitely dependent drift $((b_k(x)_{k\in\cU})_{x\in\Z^d}$) was exhaustively treated. The faces of the square grid $\Z^2$ are oriented clock-wise or counter-clock-wise independently, with probabilities $\frac12$-$\frac12$. If two neighbouring faces are oriented in opposite sense, then their shared edge gets the orientation dictated by the "consensus" of the two adjecent faces. Otherwise the edge remains unoriented. The random walker is driven by the oriented edges as follows: From any site of $\Z^2$ it jumps to a neighbouring site, 

-
with probability $\frac12$ in the direction of an oriented edge,

-
with probability $0$ opposite to the direction of an oriented edge, 

-
and with probability $\frac14$ along an unoriented edge in either direction.

\noindent
The reader will easily check that due to the construction, these probabilities will always add up to 1, and the drift will be divergence-free in the sense of \eqref{flow}, with the value of $b$ being $+1$,$-1$ and $0$, respectively,  in the three cases listed above.  It is easily seen that conditions \eqref{h-tensor}, \eqref{h-l2}, \eqref{v-is-curl-of-h} hold for this example, and also for its higher dimensional generalizations. Actually, the CLT for a more general class of examples  (with finitely dependent drift $((b_k(x)_{k\in\cU})_{x\in\Z^d}$) was already established in \cite{kozlov-85}.  

\medskip

{\sl Example 2: Randomly oriented Manhattan lattice, and higher dimensional analogs.} 
Orient the horizontal and vertical lines of the square grid $\Z^2$ ("streets", respectively, "avenues") independently of one-another, with probability $\frac12$-$\frac12$, in either one of their two possible directions. All edges on the same (horizontal or vertical) line are oriented in the same direction. The random walker is driven by the oriented edges as follows: From any site of $\Z^2$ it jumps to a neighbouring site 

-
with probability $\frac12$ in the direction of an oriented edge, 

-
and with probability $0$ opposite to the direction of an oriented edge. 

\noindent
The reader will easily check that due to the construction, these probabilities will always add up to 1, and the drift will be divergence-free in the sense of \eqref{flow}, with the value of $b$ being $+1$ and $-1$, respectively,  in the two cases listed above.  Extension to higher dimensions is straightforward. It turns out (see the proof in section 7 of \cite{kozma-toth-17}) that the $H_{-1}$-condition \eqref{H-1} will fail (and thus, there is no representation \eqref{v-is-curl-of-h} of the drift field) in 2 and 3 dimensions, while in dimensions greater than 3 it will hold. Accordingly, in $d\geq 4$ the CLT for the displacement of the random walker will also hold. In \cite{ledger-toth-valko-18} the superdiffusive bounds 
$t^{5/4}\ll \expect{\abs{X(t)}^2}\ll t^{3/2}$ for $d=2$, and 
$t \log\log t \ll \expect{\abs{X(t)}^2}\ll t\log t$ for $d=3$, 
are established (in the sense of Laplace transform, modulo Tauberian inversion), and it is conjectured that 
$\expect{\abs{X(t)}^2}\asymp t^{4/3}$ in $d=2$, and 
$\expect{\abs{X(t)}^2}\asymp t\sqrt{\log t}$ in $d=3$.

\medskip 

{\sl Example 3: The six-vertex (or, square ice) model, and higher dimensional analogs.}
Sample uniformly from all possible configurations of those orientations of all edges of the finite discrete torus $(\Z/L\Z)^2$, where at each single vertex there are exactly two inward and two outward pointing adjecent oriented edges. It is a far from trivial fact that the weak local limit, as $L\to\infty$, exists. This is a "uniformly sampled" random orientation of all edges of the square grid $\Z^2$ with the constraint that at each single vertex there are exactly two inward and two outward pointing adjecent oriented edges. This is the famous and celebrated six-vertex model of lattice statistical physics. In $d$-dimensions, the analogous construction yields the $2d \choose d$-vertex model on $\Z^d$.  The random walker on $\Z^d$ is driven by the oriented edges of the $2d \choose d$-vertex model as follows: From any site of $\Z^d$ it jumps to a neighbouring site 

-
with probability $\frac1d$ in the direction of an oriented edge, 

-
and with probability $0$ opposite to the direction of an oriented edge.

\noindent
The reader will easily check that due to the construction, these probabilities will always add up to 1, 
and the drift will be divergence-free in the sense of \eqref{flow}, with the value of $b$ being $+1$ and $-1$, respectively,  in the two cases listed above. In dimension $d=2$ the $H_{-1}$-condition \eqref{H-1} fails (just marginally, with a logarithmic divergence), while in dimensions $d\geq3$ it holds. As a consequence, the central limit theorem holds for the random walker on the $2d \choose d$-vertex model on $\Z^d$, in $d\geq3$ and presumably fails in $d=2$. In $d=2$ the superdiffusive asymptotics $\expect{\abs{X(t)}^2}\asymp t\sqrt{\log t}$ is conjectured (but far from proved), cf. \cite{toth-valko-12}.

\section{Spaces and operators}
\label{s: Spaces and operators}

\subsection{The infinitesimal generator}
\label{ss: The infinitesimal generator}

The infinitesimal generator of the environment process $t\mapsto \eta_t$ \eqref{envproc} is 
\begin{align}
\label{infgen}
Lf(\omega)
&
=
\sum_{k\in\cU} 
\underbrace{(1+ b_k(\omega))}_{p_k(\omega)}(f(\tau_k\omega)-f(\omega)).
\end{align}
This operator is well defined acting on all measurable functions $f:\Omega\to \R$ 

It is decomposed into Hermitian and anti-Hermitian parts (w.r.t. the stationary measure $\pi$ ) as 
\begin{align}
\label{L-decomp}
L
=-S+A, 
&&
Sf(\omega)
:=
-
\!\!\!
\sum_{k\in\cU} (f(\tau_k\omega)\!-\!f(\omega)), 
&&
Af(\omega)
:=
\sum_{k\in\cU} b_k(\omega)(f(\tau_k\omega)\!-\!f(\omega)) 
\end{align}

\subsection{Basic spaces and operators}
\label{ss: Basics}

We define various function spaces (over  $(\Omega, \pi)$) and linear operators acting on them. With usual abuse we denote \emph{classes of equivalence} of $\pi$-a.s. equal measurable functions simply as functions. Let the space of \emph{scalar}-, \emph{vector}-, \emph{rotation-free vector}-, and \emph{divergence-free vector} fields be 
\begin{align*}
\cS
&
:=
\{f:\Omega\to\R: 
f \text{ is } \cF\text{-measurable}\}
\\[10pt]
\cV
&
:=
\{ u:\Omega\to\R^\cU: 
u_k\in \cS, \ \ 
u_k(\omega)+u_{-k}(\tau_k\omega)=0, \ \ k\in\cU, 
\ \ \pi\text{-a.s.}\}
\\[10pt]
\cK
&
:=
\{u\in\cV: 
u_k(\omega)+u_l(\tau_k\omega)=u_l(\omega)+u_k(\tau_l\omega),  \ \ 
k,l\in\cU,  \ \ \pi\text{-a.s.}\}
\\[10pt]
\cD
&
:=
\{u\in\cV: 
\sum_{k\in\cU}u_k(\omega)=0
 \ \ \pi\text{-a.s.}\}.
\end{align*}
These are linear spaces (over $\R$) with no norm or topology endowed on them  yet. We call these spaces these names for the obvious reason that their lifting
\begin{align*}
&
f(x,\omega):= f(\tau_x\omega)
\ \ \ 
(f\in\cS),
&&
\text{respectively,}
&&
u_k(x,\omega):= u_k(\tau_x\omega)
\ \ \ 
(u\in\cV),
\end{align*}
are translation-wise ergodic scalar, respectively, vector fields over $\Z^d$. 

The linear operators $\partial_k, H_{k,l}: \cS\to\cS$, $k,l\in\cU$, defined below  on the whole space $\cS$ as their domain, will be the basic building blocks used in constructing more complex operators.
\begin{align}
\notag
\partial_k f(\omega)
&
:=
f(\tau_k\omega) - f(\omega), 
\\[10pt]
\label{Hkl-op}
H_{k,l}f(\omega)
&
:=
h_{k,l}(\omega) f(\omega)
\end{align}
Using these basic operators we furhter define
\begin{align}
\notag
&
\nabla:\cS\to\cV,
&&
(\nabla f)_k
:=
\partial_k f
\\[10pt]
\notag
&
\nabla^*:\cV\to \cS, 
&&
\nabla^* u
:= 
\sum_{k\in\cU}u_k
=
-
\frac12
\sum_{k\in\cU} \partial_{-k} u_k
\\[10pt]
\notag
&
\Delta:\cS\to\cS
&&
\Delta
:=
-\nabla^*\nabla 
=
\sum_{k\in\cU}
(\partial_k -I)
\\[10pt]
\label{H-op}
&
H:\cV\to\cV
&&
(Hu)_k
:=
\frac12
\sum_{l\in\cU} 
H_{k,l}(\partial_k + 2 I ) u_l
\end{align}
These operators are well defined on the whole spaces given as their respective domains and obviously, 
\begin{align*}
\Ran(\nabla)\subset\cK, 
\qquad
\Ker (\nabla^*)
=
\cD.
\end{align*}
For the time being the superscript $^*$ is only notation. It will later indicate adjunction with respect to the inner products defined in \eqref{scalar-products} below. 

On the right hand side of \eqref{H-op} the term $(\partial_k+2I)/2$ takes care of projecting back to $\cV$. This is necessary and important. One can easily check that 
\begin{align*}
&
\text{for any } 
w\in \cK: 
&&
\sum_{k\in\cU} 
b_k w_k
=
\nabla^* H w, 
\end{align*}
and this identity holds \emph{only for} $w\in \cK\subsetneqq\cV$. Using this identity, 
the Hermitian and anti-Hermitian parts of the infinitesimal generator $L$, defined in \eqref{L-decomp} are written as
\begin{align*}
&
S
=
-\Delta
=
\nabla^* \nabla, 
&&
A
=
\nabla^* H\nabla 
\end{align*}
Basically, we will work in the real Hilbert spaces
\begin{align*}
&
\cS_2
:=
\{f\in\cS: \norm{f}_2^2:= \int_{\Omega}\abs{f(\omega)}^2\, d\pi(\omega)<\infty, \ \ \int_\Omega f(\omega)\, d\pi(\omega)=0\},
\\[10pt]
&
\cV_2
:=
\{u\in\cV: \norm{u}_2^2:= \frac12\sum_{k\in\cU} \norm{u_k}_2^2 < \infty,  \ \ \int_\Omega u(\omega)\, d\pi(\omega)=0\},
\\[10pt]
&
\cK_2
:=
\cK\cap\cV_2,
\qquad
\cD_2
:=
\cD\cap\cV_2,
\end{align*}
with the scalar products
\begin{align}
\label{scalar-products}
&
\sprod{f}{g}
:=
\int_\Omega f(\omega)\, g(\omega)\, d\pi(\omega), 
&&
\sprod{u}{v}
:=
\frac12 \sum_{k\in\cU} \sprod{u_k}{b_k}. 
\end{align}
(We don't introduce  different notation for the norms and scalar products in $\cS_2$, respectively, $\cV_2$. The precise meaning of $\norm{\cdot}_2$ and $\sprod{\cdot}{\cdot}$ will be always clear from the context.)

Due to ergodicity of $(\Omega, \cF, \pi, \tau_z:z\in\Z^d)$, the space of square integrable vector fields $\cV_2$ is orthogonally decomposed as
\begin{align*}
\cV_2
=
\cK_2 \oplus \cD_2. 
\end{align*}
Later we will denote by $\Pi:\cV_2\to\cK_2$ the orthogonal projection from $\cV_2$ to $\cK_2$, see \eqref{orthproj}. 

In section \ref{Checking RSC} we will also need the Banach spaces
\begin{align*}
&
\cS_\infty
:=
\{f\in\cS: 
\norm{f}_\infty
:= 
\esssup_{\omega\in(\Omega,\mu)} \abs{f(\omega)}<\infty, 
\ \ \int_\Omega f(\omega)\, d\pi(\omega)=0\},
\\[10pt]
&
\cV_\infty
:=
\{u\in\cV: 
\norm{u}_\infty
:= 
\max_{k\in\cU} \esssup_{\omega\in(\Omega,\mu)} \abs{u_k(\omega)}<\infty, 
\ \ \int_\Omega u(\omega)\, d\pi(\omega)=0\},
\\[10pt]
&
\cK_\infty
:=
\cK\cap\cV_\infty,
\qquad
\cD_\infty
:=
\cD\cap\cV_\infty,
\end{align*}

The operators $\partial_k:\cS_2\to\cS_2$, $\nabla:\cS_2\to\cK_2$, $\nabla^*:\cV_2\to\cS_2$, $\Delta:\cS_2\to\cS_2$ are bounded, and their adjointness relations (with respect to the scalar products \eqref{scalar-products}) are  obviously
\begin{align*}
&
\partial_k^{*}=\partial_{-k}
&& 
(\nabla)^*=\nabla^*
&&
\Delta^*=\Delta\leq0. 
\end{align*}
The operators $H_{k,l}$ and $H$ defined in \eqref{Hkl-op}, respectively, \eqref{H-op}, when restricted to $\cS_2$, respectively, to $\cV_2$, are \emph{unbounded} with respect to the norms $\norm{\cdot}_2$. However, as multiplication operators there is no issue with their proper definition as densely defined self-adjoint, respectively, skew-self-adjoint operators: 
\begin{align*}
&
H_{k,l}^*=H_{k,l}
&& 
H^{*}=-H.
\end{align*}

\subsection{The space  $\cH_-$ and the Riesz operators}
\label{ss: "Sobolev spaces"}

As $\Delta=\Delta^*\leq0$ we define the self-adjoint operators $\abs{\Delta}^{1/2}=(-\Delta)^{1/2}$ and $\abs{\Delta}^{-1/2}=(-\Delta)^{-1/2}$ in terms of the Spectral Theorem, and the subspace 
\begin{align}
\notag
\cH_{-}
& 
:=
\big\{
f\in\cS_2: 
\norm{f}_-^2
:=
\lim_{\lambda\searrow0} \sprod{f}{(\lambda I-\Delta)^{-1}f}
=
\norm{\abs{\Delta}^{-1/2} f}_2^2<\infty
\big\}
\\[10pt]
\label{H-space}
& 
\phantom{:}
= 
\Dom(\abs{\Delta}^{-1/2})
= 
\Ran(\abs{\Delta}^{1/2}). 
\end{align}
Since $\Delta$ is a bounded operator over $(\cS_2, \norm{\cdot}_2)$, the Euclidean space  $(\cH_-, \norm{\cdot}_-)$ is a complete Hilbert space (closed in the $\norm{\cdot}_-$-norm, as defined in \eqref{H-space}), and since $0$ is a nondegenerate eigenvalue of $\Delta$ (due to ergodicity of $(\Omega, \pi, \tau_z:z\in\Z^d)$), $\cH_-$ is a dense subspace of $(\cS_2, \norm{\cdot}_2)$. 

Next, we define the \emph{Riesz operators} 
\begin{align}
\label{Riesz-ops}
&
\Lambda:= \nabla \abs{\Delta}^{-1/2}:\cS_2\to \cK_2, 
&&
\Lambda^*:= \abs{\Delta}^{-1/2} \nabla^*: \cV_2 \to \cS_2. 
\end{align}
It is obvious that 
\begin{align*}
& 
\norm{\Lambda f}_2 = \norm{f}_2 \
\text{ for }
f\in \cS_2,
&&
\Ker(\Lambda^*)=\cD, 
&&
\norm{\Lambda^* u}_2 = \norm{u}_2 \
\text{ for }
u\in \cK_2. 
\end{align*}
(More pedantically, a priori $\Lambda:\cH_-\to\cK_2$ extends to $\Lambda:\cS_2\to\cK_2$ as an isometry.)
Finally, we also have 
\begin{align}
\label{orthproj}
&
\Lambda^*\Lambda = I_{\cS_2}, 
&&
\Pi:= \Lambda\Lambda^*: \cV_2\to\cK_2 
\end{align}
The latter being the orthogonal projection from $\cV_2$ to $\cK_2$.

\section{Proof of Theorem \ref{thm: main}}
\label{s: Proof of Theorem {main}}

\subsection{Kipnis-Varadhan theory - the abstract form}
\label{ss: Kipnis-Varadhan}

The proof of Theorem \ref{thm: main} is based on the non-reversible (i.e. non-self-adjoint) and non-sectorial version of martingale approximation a la Kipnis-Varadhan, summarized concisely in this section.

Let $(\Omega, \cF, \pi)$ be a probability space and $t\mapsto\eta_t\in\Omega$ a Markov process assumed to be stationary and ergodic under the probability measure $\pi$, whose infinitesimal generator acting on $\cL_2(\Omega, \pi)$ decomposes as 
\begin{align*}
&
L=-S+A, 
&&
S:=-(L+L^*)/2, 
&&
A:=(L-L^*)/2. 
\end{align*}
and whose resolvent is denoted
\begin{align*}
R_\lambda:= (\lambda I-L)^{-1}. 
\end{align*}

For our current purpose we can (somewhat restrictively) assume that the operators $L,S,A$ are \emph{bounded} and also that the self-adjoint part $S$ is ergodic on its own. That is: $Sf=0$ if and only if $f$ is constant. With this in view, we restrict all computations to the subspace  of codimension 1
\begin{align*}
\cL_{2,0}
:=
\{f\in\cL_2: \int_\Omega f \, d\pi=0\}.
\end{align*}
(This corresponds to the Hilbert space $\cS_2$ in the concrete setting of our problem.)

Finally, we'll also need the subspace 
\begin{align*}
\notag 
\cH_-
&
:=
\{f\in\cL_{2,0}: \norm{f}_-^2:=\lim_{\lambda\searrow 0}\sprod{f}{(\lambda I+S)^{-1}f}=\norm{S^{-1/2}f}^2<\infty\}
\\[10pt]
&
\phantom{:}
= \Dom(S^{-1/2})
= \Ran(S^{1/2}), 
\end{align*}
with the operators $S^{\pm1/2}$ defined in terms of the Spectral Theorem.  

We quote from \cite{toth-86,horvath-toth-veto-12, toth-13}  the Kipnis-Varadhan martingale approximation in the non-self-adjoint setting. See the monograph \cite{komorowski-landim-olla-12} for historic background. 

\begin{theorem}
{\rm (\cite{toth-86,horvath-toth-veto-12, toth-13} Theorem KV)}
\label{thm: toth-86}
Let $\varphi\in\cL_{2,0}(\Omega,\pi)$. If the following two conditions hold
\begin{align}
\label{condition A&B}
& 
\lim_{\lambda\to0} 
\lambda^{1/2}\norm{R_\lambda \varphi}_2=0, 
&&
\lim_{\lambda\to0} 
\norm{S^{1/2}R_\lambda\varphi-v}_2=0, \ \ v\in\cL_2,  
\end{align}
then 
\begin{align*}
\sigma^{2}
:=
2
\lim_{\lambda\to0} 
\sprod{\varphi}{R_\lambda\varphi}
=
2
\norm{v}_2^2
\in[0,\infty)
\quad
\text{exists,}
\end{align*}
and there exists an $\cL_2$-martingale $t\mapsto Z(t)$, with stationary and
ergodic increments, adapted to the natural filtration $\cF_t$ of the Markov process $t\mapsto\eta_t$, and  with variance
\begin{align*}
\expect{\abs{Z(t)}^2}=\sigma^2 t,
\end{align*}
such that 
\begin{align}
\label{KV-martingale-approximation}
\lim_{t\to\infty}
t^{-1} \expect{\abs{\int_0^t \varphi(\eta_s)\, ds - Z(t)}^2}=0. 
\end{align}
\end{theorem}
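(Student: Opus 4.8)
The plan is to run the Kipnis--Varadhan martingale approximation in the non-self-adjoint form, in four steps. \textbf{Resolvent decomposition.} For $\lambda>0$ I would introduce the corrector $u_\lambda:=R_\lambda\varphi\in\cL_{2,0}$, which solves $\lambda u_\lambda-Lu_\lambda=\varphi$, and apply Dynkin's formula to the stationary process $t\mapsto\eta_t$: then $M_\lambda(t):=u_\lambda(\eta_t)-u_\lambda(\eta_0)-\int_0^t Lu_\lambda(\eta_s)\,ds$ is an $\cL_2$-martingale, adapted to $\cF_t$, with stationary and ergodic increments, and the carré-du-champ identity together with stationarity of $\pi$ and the skew-symmetry of $A$ gives $\expect{M_\lambda(t)^2}=-2t\,\sprod{u_\lambda}{Lu_\lambda}=2t\,\norm{S^{1/2}u_\lambda}_2^2$. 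Substituting $Lu_\lambda=\lambda u_\lambda-\varphi$ then produces the decomposition
\[
\int_0^t\varphi(\eta_s)\,ds=M_\lambda(t)+\xi_\lambda(t),\qquad
\xi_\lambda(t):=\lambda\int_0^t u_\lambda(\eta_s)\,ds-u_\lambda(\eta_t)+u_\lambda(\eta_0).
\]

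\textbf{Convergence of the approximants.} Next I would run the same computation for $u_\lambda-u_{\lambda'}$ to obtain $\expect{(M_\lambda(t)-M_{\lambda'}(t))^2}=2t\,\norm{S^{1/2}(u_\lambda-u_{\lambda'})}_2^2$. The second hypothesis in \eqref{condition A&B} says precisely that $S^{1/2}u_\lambda\to v$ in $\cL_2$ as $\lambda\to0$, hence $\{M_\lambda(t)\}_{\lambda>0}$ is Cauchy in $\cL_2$ for each fixed $t$; call the limit $Z(t)$. Since $\cL_2$-limits preserve the martingale property, adaptedness, and the stationarity and ergodicity of the increments (ergodicity because the time shift on the path space of $\eta$ is ergodic), standard bookkeeping produces a genuine right-continuous martingale $t\mapsto Z(t)$ with these properties and $\expect{Z(t)^2}=\lim_{\lambda\to0}\expect{M_\lambda(t)^2}=2t\,\norm{v}_2^2$. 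To identify $\sigma^2$ I would use $\sprod{\varphi}{R_\lambda\varphi}=\sprod{(\lambda I-L)u_\lambda}{u_\lambda}=\lambda\norm{u_\lambda}_2^2+\norm{S^{1/2}u_\lambda}_2^2$ together with the first hypothesis in \eqref{condition A&B}, which is $\lambda\norm{u_\lambda}_2^2\to0$: this gives existence of $\sigma^2=2\lim_{\lambda\to0}\sprod{\varphi}{R_\lambda\varphi}=2\norm{v}_2^2$, and hence $\expect{Z(t)^2}=\sigma^2 t$.

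\textbf{The error bound.} For \eqref{KV-martingale-approximation} I would write $\int_0^t\varphi(\eta_s)\,ds-Z(t)=\xi_\lambda(t)+(M_\lambda(t)-Z(t))$ and make the particular choice $\lambda=1/t$. Letting $\lambda'\to0$ in the Cauchy estimate gives $\expect{(M_{1/t}(t)-Z(t))^2}=2t\,\norm{S^{1/2}u_{1/t}-v}_2^2$, so $t^{-1}$ times it tends to $0$ by the second hypothesis. For the remainder, stationarity gives $\expect{u_{1/t}(\eta_0)^2}=\expect{u_{1/t}(\eta_t)^2}=\norm{u_{1/t}}_2^2$ and Jensen's inequality gives $\expect{\big(\tfrac1t\int_0^t u_{1/t}(\eta_s)\,ds\big)^2}\le\norm{u_{1/t}}_2^2$, whence $t^{-1}\expect{\xi_{1/t}(t)^2}\le 9\,t^{-1}\norm{u_{1/t}}_2^2=9\,\lambda\norm{u_\lambda}_2^2\big|_{\lambda=1/t}\to0$ by the first hypothesis. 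Combining the two pieces proves $t^{-1}\expect{\abs{\int_0^t\varphi(\eta_s)\,ds-Z(t)}^2}\to0$.

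\textbf{Where the difficulty lies.} Within this abstract statement the load-bearing step is the carré-du-champ identity of the second step, which turns the $\cL_2$-closeness of the correctors $S^{1/2}u_\lambda$ into $\cL_2$-closeness of the martingales $M_\lambda$; the choice $\lambda=1/t$ in the error step is the device that makes the two hypotheses in \eqref{condition A&B} do precisely the work required, sidestepping the $\cH_-$-type a priori bound on time integrals used in the classical reversible proof. The genuine obstacle of the whole programme is not here but in \emph{verifying} the two hypotheses of \eqref{condition A&B} for the drift $\varphi$ of \eqref{drift}: when $L$ is self-adjoint these are automatic as soon as $\varphi\in\cH_-$, but here $L$ is neither self-adjoint nor sectorial, and one must instead establish the relaxed sector condition, which is the business of Section~\ref{Checking RSC}.
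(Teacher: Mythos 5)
Your proof is correct and is essentially the standard Kipnis--Varadhan argument in its non-self-adjoint form: the paper itself quotes Theorem \ref{thm: toth-86} from \cite{toth-86,horvath-toth-veto-12,toth-13} without reproducing a proof, and those sources argue exactly as you do --- Dynkin martingale for the resolvent corrector $u_\lambda=R_\lambda\varphi$, the identity $\expect{M_\lambda(t)^2}=-2t\sprod{u_\lambda}{Lu_\lambda}=2t\norm{S^{1/2}u_\lambda}_2^2$ (skew-symmetry of $A$), Cauchy-ness of $M_\lambda(t)$ from condition B, and the choice $\lambda=1/t$ with condition A to kill the remainder. Your closing remark is also accurate: in the present paper the substance lies not in this quoted theorem but in verifying its hypotheses via the relaxed sector condition of Theorem \ref{thm: rsc}, carried out in Section \ref{Checking RSC}.
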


\noindent
Conditions \eqref{condition A&B} of Theorem \ref{thm: toth-86} are difficult to check directly. Sufficient conditions are known under the names of Strong Sector Condition \cite{varadhan-95}, respectively,  Graded Sector Condition \cite{sethuraman-varadhan-yau-00}. See also the monograph \cite{komorowski-landim-olla-12} for context and details. However, these conditions hold only under very special assumptions about the Markov process considered: a graded structure of the infinitesimal generator $L$ acting on an accordingly graded Hilbert space $\cL_2$. This structural assumption simply doesn't hold in many cases of interest, including our current problem. 

The next theorem, quoted from \cite{horvath-toth-veto-12}, provides a sufficient condition which does not assume sectorial structure (grading) of the infinitesimal generator $L$ acting on the Hilbert space $\cL_{2,0}(\Omega,\pi)$. Let
\begin{align*}
\cB:=
\{f\in\cH_-: 
AS^{-1/2}f
\in\cH_-
\}
\end{align*}
and $B:\cB\to\cL_{2,0}$ defined as
\begin{align*}
Bf
:=
S^{-1/2}A S^{-1/2}f. 
\end{align*}
Note that the operator $B:\cB\to\cL_{2,0}$ is unbounded (except for the elementary cases when the operator $S:\cL_{2,0}\to \cL_{2,0}$ is invertible) and  skew symmetric. Indeed, for $f,g\in\cB$ all the straightforward steps below are legitimate
\begin{align*}
\sprod{f}{S^{-1/2}AS^{-1/2}g}
&
=
\sprod{S^{-1/2}f}{AS^{-1/2}g}
\\[10pt]
&
=
-
\sprod{AS^{-1/2}f}{S^{-1/2}g}
=
-
\sprod{S^{-1/2}AS^{-1/2}f}{g}
\end{align*}
Of course, it could happen that the subspace $\cB$ is not dense in $\cL_{2,0}$, or, even worse, that simply $\cB=\{0\}$. Even if $\cB$ is a dense subspace  in $\cL_{2,0}$, in principle it could still happen that the operator $B$ (which in this case is densely defined and skew-symmetric) is not essentially skew-self-adjoint.

\begin{theorem} 
{\rm (\cite{horvath-toth-veto-12} Theorem 1)}
\label{thm: rsc}
Assume that there exists a subspace $\cC\subseteqq\cB$ which is dense in $\cL_{2,0}$ and the operator $B:\cC\to\cL_2$ is essentially skew-self-adjoint (that is,  $\overline B=-B^*$).  Then for any $\varphi\in\cH_-$ the conditions of Theorem \ref{thm: toth-86} (and hence the martingale approximation \eqref{KV-martingale-approximation}) hold.
\end{theorem}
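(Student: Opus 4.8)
The plan is to derive the two conditions in \eqref{condition A&B} from the hypothesis that $B=S^{-1/2}AS^{-1/2}$ is essentially skew-self-adjoint on a dense domain $\cC\subseteq\cB$. The basic algebraic trick is to rewrite the resolvent $R_\lambda\varphi=(\lambda I-L)^{-1}\varphi=(\lambda I+S-A)^{-1}\varphi$ in terms of $B$ by conjugating with $S^{1/2}$ and $S^{-1/2}$. Concretely, for $\varphi\in\cH_-=\Ran(S^{1/2})$ write $\varphi=S^{1/2}\psi$ with $\psi\in\cL_{2,0}$, and set $u_\lambda:=S^{1/2}R_\lambda\varphi$. A formal computation gives $u_\lambda=(\lambda S^{-1}+I-B)^{-1}\psi$, where the operator $\lambda S^{-1}+I-B$ is an accretive (indeed, $\re\geq1$) perturbation of the skew-self-adjoint $-B$, hence boundedly invertible with $\norm{(\lambda S^{-1}+I-B)^{-1}}\leq1$. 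This already yields $\sup_\lambda\norm{S^{1/2}R_\lambda\varphi}_2=\sup_\lambda\norm{u_\lambda}_2\leq\norm{\psi}_2<\infty$.

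The first step, then, is to make this conjugation rigorous: one must check that the formal manipulations are legitimate on the relevant domains, using that $S$ and $A$ are bounded, that $S^{-1/2}$ is a well-defined (possibly unbounded) positive self-adjoint operator, and that $\cH_-$ is exactly $\Ran(S^{1/2})=\Dom(S^{-1/2})$. The key analytic input is that $\ol B=-B^*$, so $I-\ol B$ is invertible and $(\lambda S^{-1}+I-\ol B)$ is a bijection of its domain onto $\cL_{2,0}$ for $\lambda>0$ with uniformly bounded inverse. The second step is to establish convergence $u_\lambda\to v$ in $\cL_2$ as $\lambda\to0$: since $\lambda S^{-1}\to0$ strongly on $\Dom(S^{-1})$ and the family $(\lambda S^{-1}+I-\ol B)^{-1}$ is uniformly bounded, a standard resolvent-convergence argument (approximate $\psi$ by elements of a suitable core, use the uniform bound, and pass to the limit) shows $u_\lambda\to(I-\ol B)^{-1}\psi=:v$. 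This gives the second condition in \eqref{condition A&B}. The third step is the first condition, $\lambda^{1/2}\norm{R_\lambda\varphi}_2\to0$: here one uses the standard Kipnis-Varadhan identity $\lambda\norm{R_\lambda\varphi}_2^2\leq\sprod{\varphi}{R_\lambda\varphi}=\sprod{\psi}{u_\lambda}\leq\norm{\psi}_2\norm{u_\lambda}_2$ combined with the fact that, by the argument just given, $\lambda\sprod{\varphi}{R_\lambda\varphi}=\lambda\sprod{\psi}{u_\lambda}\to0$ because $u_\lambda$ stays bounded; so $\sprod{\varphi}{R_\lambda\varphi}$ stays bounded, hence $\lambda^{1/2}\norm{R_\lambda\varphi}_2\to0$. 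Finally one identifies $\sigma^2=2\lim_\lambda\sprod{\varphi}{R_\lambda\varphi}=2\sprod{\psi}{v}=2\norm{v}_2^2$ using skew-symmetry of $\ol B$, which is exactly the conclusion of Theorem \ref{thm: toth-86}, and the martingale approximation \eqref{KV-martingale-approximation} follows.

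I expect the main obstacle to be the rigorous handling of the \emph{unbounded} conjugating operator $S^{-1/2}$ and the precise domain bookkeeping: one must verify that $R_\lambda\varphi$ indeed lands in $\Dom(S^{1/2})$, that $S^{1/2}R_\lambda\varphi\in\Dom(S^{-1/2})$ so that $BS^{1/2}R_\lambda\varphi$ makes sense, and that the identity $u_\lambda=(\lambda S^{-1}+I-B)^{-1}\psi$ holds with $B$ replaced by its closure $\ol B$ on the appropriate domain — the hypothesis $\ol B=-B^*$ is precisely what is needed to close this gap, since without essential skew-self-adjointness the operator $I-B$ need not have dense range and the inverse need not be everywhere defined. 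Once that functional-analytic point is secured, the estimates are soft: everything reduces to the contraction bound $\norm{(\lambda S^{-1}+I-\ol B)^{-1}}\leq1$ and strong resolvent convergence as $\lambda\searrow0$. This is why the real work of the paper is pushed into \emph{verifying} the essential skew-self-adjointness hypothesis for the concrete operator $B$ of \eqref{B-op-concrete} (the content of section \ref{Checking RSC}), rather than into the abstract implication stated in Theorem \ref{thm: rsc}.
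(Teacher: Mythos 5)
Note first that the paper itself contains no proof of Theorem \ref{thm: rsc}: it is quoted (in slightly reformulated form) from \cite{horvath-toth-veto-12}, so your proposal must be measured against the argument given there. Your overall strategy -- conjugate the resolvent equation by square roots of the symmetric part, use skew-symmetry for a uniform contraction bound, and use essential skew-self-adjointness to pass to the limit $\lambda\searrow0$ -- is indeed the strategy of \cite{horvath-toth-veto-12}, but two of your concrete steps have genuine gaps. The pivotal one is your claim that $\lambda S^{-1}+I-\ol B$ is a bijection onto $\cL_{2,0}$ with $\norm{(\lambda S^{-1}+I-\ol B)^{-1}}\leq 1$ ``because it is an accretive perturbation of a skew-self-adjoint operator''. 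Accretivity only yields injectivity and a bound for the inverse \emph{on its range}; the issue is surjectivity (maximal accretivity), and the sum of the two unbounded operators $\lambda S^{-1}$ and $-\ol B$, whose domains you do not control (a priori $\Dom(S^{-1})\cap\Dom(\ol B)$ could even fail to be dense), is not automatically m-accretive. This is exactly why \cite{horvath-toth-veto-12} avoids the singular conjugation by $S^{\pm1/2}$ and works instead with bounded regularizations: either $B_\lambda:=(\lambda I+S)^{-1/2}A(\lambda I+S)^{-1/2}$, which is bounded and skew-Hermitian so that $(I-B_\lambda)^{-1}$ exists trivially with norm at most $1$, or equivalently the operator $S^{1/2}R_\lambda S^{1/2}$, which is a contraction by the elementary identity $\re\sprod{S^{1/2}f}{R_\lambda S^{1/2}f}=\lambda\norm{R_\lambda S^{1/2}f}_2^2+\norm{S^{1/2}R_\lambda S^{1/2}f}_2^2$. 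The essential skew-self-adjointness hypothesis is then used precisely to identify the $\lambda\to0$ limit: one checks $S^{1/2}R_\lambda S^{1/2}(I-B)g\to g$ for $g\in\cC$, notes that $(I-B)\cC$ is dense in $\cL_{2,0}$ \emph{because} $\ol B$ is skew-self-adjoint with core $\cC$ (so $\Ran(I-\ol B)=\cL_{2,0}$), and concludes strong convergence to $(I-\ol B)^{-1}$ by the uniform bound. This limiting step is the heart of the theorem, and in your write-up it is compressed into ``a standard resolvent-convergence argument'' applied to inverses whose very existence has not been established.

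The second gap is the derivation of the first condition in \eqref{condition A&B}, which as written is a non sequitur: from $\lambda\norm{R_\lambda\varphi}_2^2\leq\re\sprod{\varphi}{R_\lambda\varphi}=\re\sprod{\psi}{u_\lambda}\leq\norm{\psi}_2\norm{u_\lambda}_2$ and boundedness of $u_\lambda$ you obtain only \emph{boundedness} of $\lambda\norm{R_\lambda\varphi}_2^2$, not $\lambda^{1/2}\norm{R_\lambda\varphi}_2\to0$. The correct deduction uses the full identity $\re\sprod{\varphi}{R_\lambda\varphi}=\lambda\norm{R_\lambda\varphi}_2^2+\norm{u_\lambda}_2^2$ together with the \emph{strong} convergence $u_\lambda\to v$: then $\re\sprod{\psi}{u_\lambda}\to\re\sprod{\psi}{v}=\sprod{(I-\ol B)v}{v}=\norm{v}_2^2$ by skew-symmetry, while $\norm{u_\lambda}_2^2\to\norm{v}_2^2$, and subtracting gives $\lambda\norm{R_\lambda\varphi}_2^2\to0$. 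So condition (A) is a consequence of the already-established condition (B), not of mere boundedness; once the first gap is repaired along the lines sketched above, this second point is a short fix, and the identification $\sigma^2=2\norm{v}_2^2$ follows as you indicate.
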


\noindent
{\bf Remarks:} 
(1) 
In \cite{horvath-toth-veto-12} the theorem is formulated in slightly different terms. However, it is easy to see that this form follows directly from that of Theorem 1 in \cite{horvath-toth-veto-12}. 
\\
(2)
The conditions of Theorem \ref{thm: rsc} are equivalent to $\cB$ being dense in $\cL_{2,0}$ and $B:\cB\to\cL_2$ essentially skew-self-adjoint. The formulation of the theorem gives some flexibility in choosing the core $\cC\subseteqq \cB$. 

\subsection{Proof of Theorem \ref{thm: main}}
\label{Checking RSC}

We check the conditions of Theorem \ref{thm: rsc} for the concrete case under consideration, when the Markov process $t\mapsto\eta_t$  is the environment process cf. \eqref{envproc}, its infinitesimal generator $L$ given in \eqref{infgen}, \eqref{L-decomp} acts on the Hilbert space $\cS_2$, and $\varphi$ is the drift given in \eqref{drift}.

It is essentially straightforward (and shown in \cite{kozma-toth-17}) that the $H_{-1}$-condition \eqref{H-1} (and thus also \eqref{h-tensor}, \eqref{h-l2}, \eqref{v-is-curl-of-h}, jointly) are equivalent to $\varphi\in\cH_{-1}$. (Hence the name of the condition \eqref{H-1}.)
It remains to prove skew-self-adjointness of the operator $S^{-1/2}AS^{-1/2}=\abs{\Delta}^{-1/2}\nabla^* H \nabla \abs{\Delta}^{-1/2}$ -- properly defined. This is exactly what we do in what follows. 

In this case the subspace $\cB$ is 
\begin{align*}
\cB
:=
\{
f
\in\cH_-: 
\nabla^*H\nabla\abs{\Delta}^{-1/2} f
\in\cH_-
\}
\end{align*}
and $B:\cB\to\cS_2$ acts as
\begin{align}
\label{B-op-concrete}
Bf:= 
\abs{\Delta}^{-1/2} \nabla^* H \nabla \abs{\Delta}^{-1/2} f. 
\end{align}
Let 
\begin{align*}
\cC
&
:=
\{f=\abs{\Delta}^{1/2} g: g \in \cS_\infty\}
\end{align*}
Obviously, the subspace $\cC$  is dense in $(\cS_2, \norm{\cdot}_2)$, and for $f\in\cC$, the equation $f=\abs{\Delta}^{1/2}g$ determines \emph{uniquely} $g\in\cS_\infty$. Furthermore, 
\begin{align*}
\nabla^*H\nabla\abs{\Delta}^{-1/2} f
=
\nabla^ *
\underbrace{H 
\underbrace{\nabla 
\underbrace{g}_{\in\cS_\infty}}_{\in\cK_\infty}}_{\in\cV_2}
\in\cH_-. 
\end{align*}
Thus, indeed, 
\begin{align*}
\cC\subsetneqq\cB\subsetneqq\cS_2=\overline{\cC}, 
\end{align*}
where the $\overline{\cC}$ denotes closure of $\cC$ with respect to the norm $\norm{\cdot}_2$. 

\begin{proposition}
\label{prop: B-is-skew-self-adjoint}
The linear operator $B:\cC\to\cS_2$ is essentially skew-self-adjoint.  
\end{proposition}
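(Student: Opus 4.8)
The plan is to prove essential skew-self-adjointness of $B:\cC\to\cS_2$ by showing that the deficiency subspaces are trivial, i.e. that $\Ran(I\pm B)$ (equivalently $\Ran(\lambda I - B)$ for one/all $\lambda$ with $\re\lambda\ne0$) is dense in $\cS_2$; equivalently, that the only solutions $u\in\Dom(B^*)$ of $B^*u=\pm u$ are $u=0$. Since $B$ is already known to be densely defined and skew-symmetric (the short computation in the excerpt), by the basic von Neumann criterion it suffices to check this deficiency condition. Concretely, I would suppose $u\in\cS_2$ satisfies $\sprod{Bf}{u}=\pm\sprod{f}{u}$ for all $f\in\cC$ and aim to derive $u=0$.

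First I would unwind the definitions using the Riesz-operator factorization from \eqref{Riesz-ops} and \eqref{orthproj}: writing $f=\abs{\Delta}^{1/2}g$ with $g\in\cS_\infty$, we have $Bf=\abs{\Delta}^{-1/2}\nabla^*H\nabla g=\Lambda^*(H\nabla g)$ (at least after projecting $H\nabla g$ onto $\cK_2$, since $\Lambda^*$ annihilates $\cD_2$), so the eigenvalue equation becomes $\sprod{\Lambda^*(H\nabla g)}{u}=\pm\sprod{\abs{\Delta}^{1/2}g}{u}$, i.e. $\sprod{H\nabla g}{\Lambda u}=\pm\sprod{g}{\abs{\Delta}^{1/2}u}$ for all $g\in\cS_\infty$. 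The point of choosing the core $\cC$ with $g\in\cS_\infty$ rather than $g\in\cS_2$ is precisely that $\nabla g\in\cK_\infty$, so $H\nabla g\in\cV_2$ is genuinely in the Hilbert space and the multiplication operator $H$ causes no domain trouble; moreover $\cS_\infty$ is large enough (dense in $\cS_2$, and stable under the relevant manipulations) to let me test against a rich family. I would then try to bootstrap regularity of $u$: the identity says that the distributional object $\nabla^*H\nabla$ applied to $\abs{\Delta}^{-1/2}u$ equals $\pm\abs{\Delta}^{1/2}u\in\cS_2$, so $\abs{\Delta}^{-1/2}u\in\cB$, and then iterate — apply the skew-symmetry relation again with $f$ replaced by a function built from $u$ — to conclude that $u$ lies in the domain where $B$ and $B^*$ agree, forcing $\sprod{u}{Bu_{\mathrm{reg}}}$ to be simultaneously purely imaginary (skew-symmetry) and equal to $\pm\norm{u}^2$ (eigen-equation), hence $u=0$.

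The main obstacle — and the place where the $\cH_{-1}$-condition and the structure of $H$ (the stream tensor being genuinely $\cL_2$, per \eqref{h-l2}) really enter — is the bootstrapping step: a priori $u\in\cS_2$ only, so $\abs{\Delta}^{-1/2}u\in\cH_-$ and $\nabla\abs{\Delta}^{-1/2}u=\Lambda u\in\cK_2$, but then $H\Lambda u$ need not lie in $\cV_2$ because $H$ is an unbounded multiplication operator and $\Lambda u$ is merely square-integrable, not bounded. So I cannot directly apply $\nabla^*$ and close the loop; I must instead work entirely in the weak/sesquilinear formulation, using a truncation of $h_{k,l}$ (replace $H_{k,l}$ by $H_{k,l}\ind{\abs{h_{k,l}}\le M}$), establish the identity for the truncated operators where everything is bounded, and pass to the limit $M\to\infty$ using \eqref{h-l2} (dominated convergence, since $h_{k,l}\in\cL_2$ and $\Lambda u, \abs{\Delta}^{1/2}u\in\cS_2$ control the products). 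This regularization-and-limit argument, showing the deficiency equation has no nonzero solution, is the heart of the proof; the rest is the standard von Neumann wrap-up. I also expect to need that $\cC$ is a core in the strong sense — that $B|_\cC$ and $B|_\cB$ have the same closure — which follows because any $f\in\cB$ can be approximated in graph norm by elements of $\cC$ via truncating $g=\abs{\Delta}^{-1/2}f$, again using $\cS_\infty$ dense in $\cS_2$ together with boundedness of $\abs{\Delta}^{\pm1/2}$ on the relevant subspaces.
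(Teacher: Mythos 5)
Your overall skeleton is close to the paper's: you reduce the problem via the Riesz isometries to the operator $F=\Pi H\Pi$ on the bounded core $\cF=\{\nabla g:g\in\cS_\infty\}$, you correctly identify that boundedness of $\nabla g$ is what makes $H\nabla g\in\cV_2$, and you propose truncating the stream tensor and passing to the limit. The paper does the same, except that instead of the von Neumann deficiency criterion it proves directly that the adjoint $F^*$ is skew-symmetric on its whole domain $\cF^*$, whence $F^{**}=-F^*$; your deficiency-index wrap-up is a legitimate equivalent. The genuine gap is in how you justify the limit at the one place where it is really needed. Writing $w:=\Lambda u\in\cK_2$ for a putative deficiency vector $u$, your weak formulation does give $w\in\Dom(F^*)$ and $F^*w=\pm w$, and there your dominated-convergence argument is fine, because one factor ($\nabla g$, $g\in\cS_\infty$) is bounded, so the integrand is dominated by $\abs{w}\,\abs{h}\,\norm{\nabla g}_\infty\in\cL_1$. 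But to conclude $w=0$ you must know $\sprod{w}{F^*w}=0$, i.e.\ you must control the quadratic form of $F^*$ at the \emph{unbounded} vector $w$, and there the justification you offer fails: the relevant integrand is of the form $\sum_{k,l}w_k\,h^K_{k,l}\,(\cdots w\cdots)$, a product of three functions that are merely square integrable, which need not be integrable at all --- indeed $\sprod{w}{Hw}$ need not even be defined. ``Dominated convergence since $h_{k,l}\in\cL_2$ and $\Lambda u\in\cS_2$ control the products'' does not cover this step. The paper's Lemma \ref{lem: weak-limit} is precisely the missing ingredient: one shows $F^*w=-\wlim_{K\to\infty}\Pi H^{K}w$ in $\cK_2$, the limit being \emph{identified} by testing only against the bounded dense class $\cF$, and then $\sprod{w}{F^*w}=-\lim_{K\to\infty}\sprod{w}{H^{K}w}=0$ follows from weak convergence together with skew-symmetry of each bounded truncation $H^K$ --- no convergence of integrands with two unbounded factors is ever invoked. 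Without this weak-limit representation (or an equivalent device) your plan does not close.

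A few subsidiary points are also off. In this real Hilbert space skew-symmetry forces $\sprod{f}{Bf}=0$, not ``purely imaginary''. The bootstrap claims ``$\abs{\Delta}^{-1/2}u\in\cH_-$'' and ``$\abs{\Delta}^{-1/2}u\in\cB$'' for a general $u\in\cS_2$ are not meaningful: $\abs{\Delta}^{-1/2}u$ exists only when $u\in\cH_-$; the correct object is $\Lambda u$, which is defined for all $u\in\cS_2$ by the isometric extension, and which you do use later. Finally, the closing claim that you need, and can prove by truncating $g$, that $B|_{\cC}$ and $B|_{\cB}$ have the same closure is both unnecessary --- Theorem \ref{thm: rsc} only requires \emph{some} dense subspace $\cC\subseteq\cB$ on which $B$ is essentially skew-self-adjoint --- and unsubstantiated as stated, since graph-norm approximation of elements of $\cB$ by the proposed truncations is not obvious.
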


\begin{proof}
Let 
\begin{align*}
\cF
:=
\Lambda \cC
=
\{\nabla g: g\in\cS_\infty\}
\subsetneqq 
\cK_\infty
\subsetneqq
\cK_2, 
\end{align*}
and define the operator $F:= \cF\to \cK_2$ as
\begin{align}
\label{op-F-def}
&
F:= 
\Lambda B\Lambda^* 
= 
\Pi H \Pi, 
&&
Fu
:=
\underbrace{\Pi \underbrace{H \underbrace{u}_{\in\cK_\infty}}_{\in\cV_2}}_{\in\cK_2}, 
\end{align}
where $\Lambda, \Lambda^*$ and $\Pi$ are the Riesz operators and the orthogonal projection defined in \eqref{Riesz-ops}, \eqref{orthproj}. 

Since $\Lambda: \cS_2\to \cK_2$, $\Lambda^*:\cK_2\to\cS_2$ are \emph{isometries}, the statement of the proposition is equivalent to the operator $F:\cF\to\cK_2$ being essentially skew-self-adjoint. This is what we are going to prove. 

Obviously, the operator $F$ is  skew-symmetric on $\cF$, since for $u,w \in \cF$ the identity 
\begin{align*}
\sprod{ u}{ H v}
=
-
\sprod{H  u}{ v}
\end{align*}
is legitimate. Next we define the adjoint of  $F: \cF\to \cK_2$. Its domain is 
\begin{align*}
\cF^{*}
:=
\{w\in\cK_2 : \exists c=c(w)<\infty : \forall u\in\cF: \abs{\sprod{w}{ H u}}\leq c \norm{u}_2 \},
\end{align*}
and $F^*:\cF^*\to \cK_2$ is defined uniquely by the Riesz Lemma: for any $w\in\cF^*$, $F^* w$ is the unique element of $\cK_2$ such that for all $u\in \cF$
\begin{align*}
\sprod{F^* w }{u} 
=
\sprod{w}{ H u}. 
\end{align*}
Obviously, $\cF\subsetneqq \cF^*$,  $F^*|_{\cF}=-F$, and 
\begin{align*}
F 
\prec 
F^{**}
\preceq
-F^{*}
\end{align*}
In order to conclude 
\begin{align*}
F^{**}
=
-F^{*}
\end{align*}
and thus essential skew-self-adjointness of $F$, as defined in \eqref{op-F-def}, it is sufficient to prove that $F^*$ is skew-symmetric on $\cF^*$. 

For $K<\infty$, let 
\begin{align*}
h^{K}_{k,l}(\omega)
:=
h_{k,l}(\omega) \ind{\abs{h_{k,l}(\omega)}\leq K}.
\end{align*}
These truncated functions inherit the stream-tensor (anti)symmetries \eqref{h-tensor}. We define the \emph{bounded} operator $H^{K}: \cV_2\to\cV_2$ by \eqref{Hkl-op}\&\eqref{H-op}, with the stream tensor $h$ replaced by its truncated version $h^K$

\begin{lemma}
\label{lem: weak-limit}
For $w\in\cF^*$ 
\begin{align}
\label{weak-lim}
F^*w 
=
-
\wlim_{K\to\infty}  \Pi H^{K} w,
\end{align}
where $\wlim$ denotes weak limit in the Hilbert space $(\cK_2, \norm{\cdot}_2)$.
\end{lemma}

\begin{proof}
This is straightforward. Let $w\in\cF^*$ and $u\in\cF$. Then 
\begin{align*}
-
\lim_{K\to\infty}
\sprod{ H^{K} w}{u}
=
\lim_{K\to\infty}
\sprod{w}{ H^{K} u}
=
\sprod{w}{ H u}
=
\sprod{F^* w}{u}, 
\end{align*}
where the limit in the second step follows from \emph{uniform integrability} and \emph{almost sure convergence} (over $(\Omega, \pi)$) of the sequence of functions 
\begin{align*}
\omega\mapsto
\sum_{k,l \in\cH} w_k(\omega) h^{K}_{k,l}(\omega) (u_l(\omega)+u_l(\tau_k\omega))
\end{align*}
as $K\to\infty$. 

Since $\cF$ is dense in $\cK_2$ \eqref{weak-lim} follows. 
\end{proof}

From \eqref{weak-lim} the skew-symmetry of the operator $F^*:\cF^*\to\cK_2$ drops out: for  $u,w\in\cF^*$
\begin{align*}
\sprod{w}{F^* u}
=
\lim_{K\to\infty}
\sprod{w}{H^K u}
=
-
\lim_{K\to\infty}
\sprod{H^K w}{ u}
=
-
\sprod{F^* w}{ u}.
\end{align*}
This concludes the proof of essential skew-self-adjointness of the operator $F:\cF\to \cK_2$ and thus of the operator $B$ of \eqref{B-op-concrete} defined on the core $\cC=\Lambda^* \cF$. 
\end{proof}
This also concludes checking all conditions of Theorem \ref{thm: rsc} in the concrete setting and thus also the proof of Theorem \ref{thm: main}. 
\qed

\bigskip

\begin{acknowledgements} 
This work was supported by the Hungarian National Research and Innovation Office through the grant NKFIH/OTKA K-143468. 
\end{acknowledgements}



\begin{references}{9}

\bibitem{cannizzaro-haunschmid-sibitz-toninelli-22}
G.~Cannizzaro, L.~Haunschmid-Sibitz, F.~Toninelli:
{\sl $\sqrt{\log t}$-superdiffusivity for a Brownian particle in the curl of the 2D GFF.}
{Ann. Probab.} {\bf 50} (2022), 2475-2498.

\bibitem{chatzigeorgiou-morfe-otto-wang-22}
G.~Chatzigeorgiou, P.~Morfe, F.~Otto, L.~Wang:
{\sl The Gaussian free-field as a stream function: asymptotics of effective diffusivity in infra-red cut-off.}
{\tt arXiv:2212.14244} (2022)

\bibitem{deuschel-kosters-08}
J-D.~Deuschel, H.~K\"osters: 
{\sl The quenched invariance principle for random walks in random environments admitting a bounded cycle representation. }
{Ann. Inst. H. Poincar\'e -- Probab. et Stat.} {\bf 44} (2008), 574-591.
 
\bibitem{fannjiang-komorowski-97}
A.~Fannjiang, T.~Komorowski: 
{\sl A martingale approach to homogenization of unbounded random flows.}
{Ann. Probab.} {\bf 25} (1997),  1872-1894.

\bibitem{fannjiang-papanicolaou-96}
A.~Fannjiang, G.~Papanicolaou: 
{\sl Diffusion in turbulence.}
{Probab. Theory Relat. Fields} {\bf 105} (1996),  279-334.

\bibitem{horvath-toth-veto-12}
I.~ Horv\'ath, B.~T\'oth, B.~Vet\H o:
{\sl Relaxed sector condition.}
{Bull. Inst. Math. Acad. Sin. (N.S.)} {\bf 7} (2012),  463-476.

\bibitem{komorowski-landim-olla-12}
T.~Komorowski, C.~Landim, S.~Olla:
{\sl Fluctuations in Markov Processes -- Time Symmetry and Martingale Approximation.}
{Grundlehren der mathematischen Wissenschaften} {\bf 345},
Springer, 2012.

\bibitem{komorowski-olla-02}
T.~Komorowski, S.~Olla:
{\sl On the superdiffusive behaviour of passive tracer with a Gaussian drift.}
{J. Stat. Phys.} {\bf 108} (2002),  647-668.

\bibitem{komorowski-olla-03a}
T.~Komorowski, S.~Olla:
{\sl A note on the central limit theorem for two-fold stochastic random walks in a random environment.} 
{Bull. Polish Acad. Sci. Math.} {\bf 51}  (2003),  217-232.

\bibitem{komorowski-olla-03b}
T.~Komorowski, S.~Olla:
{\sl On the sector condition and homogenization of diffusions with a Gaussian drift.} 
{J. Funct. Anal.} {\bf 197} (2003),  179-211.

\bibitem{kozlov-79}
S.M.~Kozlov:
{\sl The averaging of random operators.}
{Mat. Sb. (N.S.)} {\bf 109(151)}  (1979),  188-202.

\bibitem{kozlov-85}
S.M.~Kozlov:
{\sl The method of averaging and walks in inhomogeneous environments.}
{Uspekhi Mat. Nauk} {\bf 40} (1985),  61-120.
[English version:
{Russian Math. Surveys} {\bf 40} (1985), 73-145.]

\bibitem{kozma-toth-17}
G.~Kozma, B.~T\'oth: 
{\sl Central limit theorem for random walks in doubly stochastic random environment: $\cH_{-1}$ suffices.}
{Ann. Probab.} {\bf 45} (2017),  4307-4347. 

\bibitem{ledger-toth-valko-18}
S.~Ledger, B.~T\'oth, B. Valk\'o:
{\sl Superdiffusive bounds for random walks on randomly oriented Manhattan lattices.} 
{Elect. Commun. Probab.} {\bf 23} (2018), paper no. 43, 1-11.

\bibitem{mcleish-74}
D.L.~McLeish: 
{\sl Dependent central limit theorems and invariance principles. }
{Ann. Probab.} {\bf 2} (1974),  620-628.

\bibitem{morris-peres-05}
B.~Morris, Y.~Peres: 
{\sl Evolving sets, mixing and heat kernel bounds. }
{Probab. Theory Related Fields} {\bf 133} (2005),  245-266.


\bibitem{oelschlager-88}
K.~Oelschl\"ager: 
{\sl Homogenization of a diffusion process in a divergence-free random field.} 
{Ann. Probab.} {\bf 16} (1988),  1084-1126.

\bibitem{osada-83}
H.~Osada: 
{\sl Homogenization of diffusion processes with random stationary
coefficients.} 
In:  K.~It\^o, J.V.~Prokhorov (eds): 
{\sl Probability theory and mathematical statistics -- Tbilisi, 1982}, 
Springer, Berlin, 
{Lecture Notes in Mathematics} {\bf 1021}  (1982), 507-517.

\bibitem{papanicolaou-varadhan-81}
G.C.~Papanicolaou, S.R.S.~Varadhan: 
{\sl Boundary value problems with rapidly oscillating  random coefficients.} 
In: J.~Fritz, J.L.~Lebowitz,  D.~Sz\'asz (eds): 
{\sl Random fields -- Esztergom, 1979}. 
North-Holland, Amsterdam-New  York.
{Colloq. Math. Soc. J\'anos Bolyai.} {\bf 27} (1981),   835-873.

\bibitem{read-simon-vol1}
M.~Read, B.~Simon: 
{\sl Methods of Modern Mathematical Physics: I. Functional Analysis.} 
Academic Press Inc. 1980

\bibitem{sethuraman-varadhan-yau-00}
S.~Sethuraman, S.R.S.~Varadhan, H-T.~Yau:
{\sl Diffusive limit of a tagged particle in asymmetric simple exclusion processes.} 
{Comm. Pure Appl. Math.} {\bf 53} (2000),  972-1006.

\bibitem{toth-86}
B.~T\'oth:
{\sl Persistent random walk in random environment.}
{Probab. Theory Relat. Fields} {\bf 71} (1986),  615-625.

\bibitem{toth-13}
B.~T\'oth:
{\sl Comment on a theorem of M. Maxwell and M. Woodroofe.}
{Electron. Commun. Probab.} {\bf 18} (2013), paper no. 13, pp. 1-4 

\bibitem{toth-18a}
B.~T\'oth:
{\sl Quenched central limit theorem for random walks in doubly stochastic random environment.}
{Ann. Probab.} {\bf 46} (2018),  3558-3577.

\bibitem{toth-18b}
B.~T\'oth:
{\sl Diffusive and super-diffusive limits for random walks and diffusions with long memory.}
In: 
{\sl Proceedings of the International Congress of Mathematicians -- Rio de Janeiro, 2018},  Vol. 3 World Scientific (2018), pp 3025-3044.  


\bibitem{toth-valko-12}
B.~T\'oth, B.~Valk\'o:
{\sl Superdiffusive bounds on self-repellent Brownian polymers and diffusion in the curl of the Gaussian free field in d=2.}
{J. Stat. Phys.} {\bf 147} (2012), 113-131. 

\bibitem{varadhan-95}
S.R.S.~Varadhan: 
{\sl Self-diffusion of a tagged particle in equilibrium for asymmetric mean zero random walk with simple exclusion.} 
{Ann. Inst. H. Poincar\'e -- Probab. et Stat.} {\bf 31} (1995), 273-285.



\end{references}
\end{document}